\titleformat{\subsection}{\it}{\thesubsection.\enspace}{1pt}{}
\newtheorem{theo}{Theorem}[section]
\newtheorem{lemm}[theo]{Lemma}
\newtheorem{defi}[theo]{Definition}
\newtheorem{coro}[theo]{Corollary}
\numberwithin{equation}{section}
\begin{document}
\title{Uniqueness of conservative solutions to the the modified Camassa-Holm equation via Characteristics
	\hspace{-4mm}
}

\author{Zhen $\mbox{He}^1$ \footnote{Email: hezh56@mail2.sysu.edu.cn},\quad
	Zhaoyang $\mbox{Yin}^{1,2}$\footnote{E-mail: mcsyzy@mail.sysu.edu.cn}\\
$^1\mbox{Department}$ of Mathematics,
Sun Yat-sen University, Guangzhou 510275, China\\
$^2\mbox{School}$ of Science,\\ Shenzhen Campus of Sun Yat-sen University, Shenzhen 518107, China}

\date{}
\maketitle
\hrule

\begin{abstract}
In this paper,for a given conservative solution, we introduce
a set of auxiliary variables tailored to this particular solution, and  prove that these
variables satisfy a particular semilinear system having unique solutions. In turn, we get
the uniqueness of the conservative solution in the original variables.
	
\vspace*{5pt}
\noindent {\it 2020 Mathematics Subject Classification}: 35Q30, 35Q84, 76B03, 76D05.
	
	\vspace*{5pt}
	\noindent{\it Keywords}: A modified Camassa-Holm equation; Globally weak 
	solutions ; Uniqueness.
\end{abstract}

\vspace*{10pt}

%\phantomsection
%\addcontentsline{toc}{section}{\contentsname}
%添加目录到书签
\tableofcontents

	\section{Introduction}
	
	  ~~In this paper,  we consider the Cauchy problem of the following modified Camassa-Holm (MOCH) equation \cite{2021}
	\begin{equation}\label{eq0}
		\left\{\begin{array}{l}
			\gamma_t=\lambda(v_x-\gamma-\frac{1}{\lambda}v\gamma)_x
			,  \quad t>0,\ x\in\mathbb{R},  \\
			v_{xx}=\gamma_x+\frac {\gamma^2} {2\lambda} ,  \quad t\geq0,\ x\in\mathbb{R},  \\
			\gamma(0,x)=\gamma_0(x),  \quad x\in\mathbb{R},
		\end{array}\right.
	\end{equation}
	which was called by Gorka and Reyes\cite{GR2010}. Let $G={\partial_x}^2-1,n=Gv$
	
	The equation \eqref{eq0} can be rewritten as
	\begin{equation}\label{eq1}
		\left\{\begin{array}{l}
			\gamma_t+G^{-1}n\gamma_x=\frac {\gamma^2}{2} +\lambda G^{-1}n-
			\gamma G^{-1}n_x,  \quad t>0,\ x\in\mathbb{R},  \\
			n=\gamma_x+\frac {\gamma^2} {2\lambda} ,  \quad t\geq0,\ x\in\mathbb{R},  \\
			\gamma(0,x)=\gamma_0(x),  \quad x\in\mathbb{R}.
		\end{array}\right.
	\end{equation}

	  The equation $\eqref{eq0}$ was first studied through the geometric approach in\cite{Chern} \cite{Reyes}. Conservation laws and the existence and uniqueness of weak solutions to the
	modified Camassa–Holm equation were presented in \cite{GR2010}. We observe that if we solve
	\eqref{eq1}, then $m$ will formally satisfy the following physical form of the Camassa–Holm

	\begin{align}
		n_t=-2vn_x-nv_x+\lambda v_x
	\end{align}

	  If $\lambda=0$, it is known as the well-known Camassa–Holm (CH) equation. 
	As far as we know, the CH equation has many properties, such as: integrability  \cite{Constantin2001,Constantin2006,Constantin1999},  Hamiltonian structure, infinitely many conservation laws \cite{Constantin2001,Fuchssteiner1981}. The local well-posedness  of  CH equation  has been proved in  Sobolev spaces $H^s, s>\frac 3 2$ and in Besov space $B^{s}_{p,r}$ with $ s>\max\{\frac{3}{2},1+\frac 1 p \}$ or $s=1+\frac 1 p,p\in [1,2],r=1$ in \cite{Constantin1998,Constantin1998w-l,Danchin2001inte,Danchin2003wp,Liu2019ill,Himonas2012CN,Li2016nwpC}. In \cite{Constantin2000gw,Xin2000ws}, the authors showed  the existence and uniqueness of global weak solutions for the CH equation. In addition, Bressan and Constantin studied  the existence of the global conservative solutions \cite{book}  and global disspative solutions \cite{Bressan2007gd}  in $H^1(\mathbb{R}).$  Later, Bressan and his collaborators used the similar technique to a variational wave equation\cite{WE1} and obtain the uniqueness \cite{WE2}. Utilizing the good structure of the semilinear ODE system proposed in \cite{WE1}, Bressan and Chen proved that, For an open dense
	set of $C^3$ initial data, we prove that the solution is piecewise smooth in the t-x plane, while the gradient $u_x$ can blow up along finitely many characteristic curves\cite{WE3}. Later, Li and Zhang studied similar property in \cite{LZ}
	
	  Luo, Qiao and Yin studied the locally well-posedness in $B^s_{p,r},s\textgreater \max\{\frac{1}{2},\frac{1}{p}\}$ or $s=\frac{1}{p},1\leq p\leq 2,r=1$, blow up phenomena, global existence for periodic MOCH and global conservative solution\cite{Luo1,Luo2,Luo3}.
	
	  The remainder of the paper is organized as follows. In Section 2 we review basic definitions
	  and state our main uniqueness result Theorem \ref{main} . Section 3 establishes the key technical tool
	  (Lemma \ref{lemm2}), determining a unique characteristic curve through each initial point. In Section 4
	  we conclude the proof of the main theorem. We can easily deduce that $\gamma=(\partial_x-1)m$ is a unique solution to \eqref{eq0}.

	\section{\textbf{Basic definitons and results}}
	Firstly we think about the initial problem
		\begin{equation}\label{F1}
		\left\{\begin{array}{l}
			x_t=u(t,x) \\
			
			x(0)=\bar{x}.
		\end{array}\right.
	\end{equation} 
		Now, let us consider a transformation~$m=(\partial_x+1)G^{-1}\gamma=(\partial_x-1)^{-1}\gamma$. Then we have  $\gamma=(\partial_x-1)m$, and therefore, equation \eqref{eq1} is changed to 
	\begin{equation}\label{ceg1}
	 m_t+um_x =F,
	\end{equation}
	where
	\begin{align}
			&	F=mu-P_4-P_{4x}+\frac{1}{2}(-m^2+P_3+P_{3x})+\lambda P_{1x}+\frac{1}{2}(P_5+P_{5x}-P_2)&
			\notag\\
	&	P_1=G^{-1}m=p\ast m,~~~~~P_2=p\ast m^2,~~~~~P_3=p\ast m_x^2,&
		\notag\\
	&	u=m-P_{1x}+P_1+\frac {1} {2\lambda}(P_3+P_2-P_{2x}),~~~~~P_4=p\ast H,&
		\notag \\
	&	P_5=p\ast P_3,~~~~~~H=um_x-um,~~~~~~p=\frac{1}{2}e^{-|x|}.&
	\notag
	\end{align}
	 
	For the initial data, we have 
	\begin{equation}\label{3ini}
		m(0,x)=\bar{m}(x)=(\partial_x-1)^{-1}\bar{\gamma}(x).
	\end{equation}
	For smooth solutions, differentiating \eqref{ceg1} w.r.t. x one obtains
	\begin{equation}\label{ceg2}
		m_{xt}+u_xm_x+um_{xx}=F_x,
	\end{equation}
	multiplying \eqref{ceg2} with $m_x$, we obtain
	\begin{equation}\label{balance1}
		\frac{d}{2dt}m_x^2+\frac{1}{2}(um_x^2)_x=E,
	\end{equation}
 where
 \begin{align}
 	&	E=(mu)_xm_x-P_{4x}m_x-Hm_x-P_4m_x+\frac{1}{2}(-2mm_x^2+P_{3x}m_x+m_x^3+P_{3}m_x)&
 		\notag\\
 	&~~~~~~	+\lambda (m+P_{1})m_x+\frac{1}{2\lambda}(P_{5x}+P_5+P_3-P_{2}+m^2)m_x-\frac{1}{2}u_xm_x^2&
 	\notag\\
 		&~~~	=(mu)_xm_x-P_{4x}m_x-Hm_x-P_4m_x+\frac{1}{2}(P_{3x}m_x+P_{3}m_x)+\lambda (m+P_{1})m_x&
 	\notag\\
 	&~~~~~~	+\frac{1}{2\lambda}(P_{5x}+P_5+P_3-P_{2}+m^2)m_x&
 	\notag\\
 	&~~~~~~-\frac{1}{2}(mm_x^2-P_1m_x^2+m_x^2P_{1x}-\frac{1}{2\lambda}(P_3+P_2-P_{2x})m_x^2).&
 \end{align}
We denote $m_x^2$ by $w$, then 
\begin{equation}\label{ceg3}
\frac{d}{dt}w+(uw)_x=2E.
\end{equation} 
Because of \eqref{ceg3}, the characteristic curve $t\rightarrow x(t)$ satisies the additional eqaution
\begin{equation}\label{ceg4}
	\frac{d}{dt}\int_{-\infty}^{x(t)}m_x^2(t,x)dx=\int_{-\infty}^{x(t)}2E(t,x)dx.
\end{equation}
And we introduce new coordinates (t,$\beta$), where $\beta$ is implictly defined as 
\begin{align}
	x(t,\beta)+\int_{-\infty}^{x(t,\beta)}m_x^2(t,\xi)d\xi=\beta.
\end{align}
At a time t where the measure $\mu_{(t)}$ is not absolutely continuous w.r.t Lebesgue measure, we can define x(t,$\beta$) to be the unique point x such that
\begin{align}\label{def1}
	x(t,\beta)+\mu_{(t)}(]-\infty,x[)\leq \beta \leq x(t,\beta)+\mu_{(t)}(]-\infty,x])
\end{align}
 Then we shall prove Lipshcitz continuous of t and u as functions of the variables t,$\beta$.
 Before providing our main results in this paper, let us first introduce some definitions of the globally conservative solutions for \eqref{eq0} and \eqref{ceg1}
 \begin{defi}\label{3defi0}
 	Let~ $\bar{\gamma}\in L^2(\mathbb{R})$. We say that~ $\gamma(t,x)$  is locally conservative solution to the Cauchy problem \eqref{eq0} if $\gamma$ satisies the following equality:
 	\begin{align*}
 		&\int_{\mathbb{R}^+}\int_{\mathbb R}\big(\gamma\psi_{t}+(\gamma\partial_xG^{-1}\gamma+\frac 1 {2\lambda}\gamma G^{-1}\gamma^2)\psi_x\big)(t,x)dx dt \\
 		&=-\int_{\mathbb{R}^+}\int_{\mathbb R}\big((\frac {\gamma^2} 2 +\lambda\partial_xG^{-1}\gamma+\frac 1 2  G^{-1}\gamma^2)\psi\big)(t,x)dxdt-\int_{\mathbb{R}}\bar{\gamma}(x)\psi(0,x)dx,
 	\end{align*}
 	for every test function $\phi\in C_c^\infty(\mathbb{R}^2)$
 \end{defi}
 \begin{defi}
 	A solution m=m(t,,x) is conservative if $w= m_x^2$ provides a distributional solution to the balance law \eqref{balance1}, namely
 	\begin{align}\label{bal}
 		\int_{0}^{\infty}\int [m_x^2\phi_t+um_x^2\phi_x+2E\phi]dxdt+\int m_{0,x}^2\phi(0,x)dx =0
 	\end{align}
 	for every test function $\phi\in C_c^1(\mathbb{R}^2)$
 \end{defi}
  Our main theorem is stated as follows.
 \begin{theo}\label{main}
 	For any initial data $\bar{m}\in H^1(\mathbb{R})$, the Cauchy problem \eqref{ceg1} has a unique conservative solution
 \end{theo}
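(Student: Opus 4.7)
The plan is to follow the Bressan--Constantin paradigm tailored to the MOCH equation \eqref{ceg1}: starting from an arbitrary conservative solution $m$ with initial datum $\bar m$, I would build a set of auxiliary unknowns depending on $(t,\beta)$, show that these unknowns satisfy a semilinear ODE--integral system whose right-hand side is globally Lipschitz, and finally invoke Picard--Lindelöf to conclude that the auxiliary unknowns, and hence $m$ itself, are uniquely determined by $\bar m$.

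Concretely, fix a conservative solution $m$ and consider the energy-adapted change of variables $x(t,\beta)$ defined through \eqref{def1}. Along with $X(t,\beta):=x(t,\beta)$ I would introduce $U(t,\beta):=u(t,X(t,\beta))$ and an auxiliary ``energy density'' variable such as $\xi(t,\beta):=\partial_\beta X$ (which equals $(1+m_x^2)^{-1}$ on the absolutely continuous part of the measure $\mu_{(t)}$, and vanishes on its singular part), together with $V(t,\beta):=m_x(t,X(t,\beta))\,\xi(t,\beta)$ and $M(t,\beta):=m(t,X(t,\beta))$. Using Lemma \ref{lemm2}, which guarantees that a unique characteristic passes through each initial point and gives Lipschitz regularity of $(t,\beta)\mapsto X,U$, I would then derive evolution equations for $X,U,M,V,\xi$ from \eqref{ceg1}, \eqref{ceg3}, \eqref{ceg4}: $\dot X=U$, $\dot M = F(t,X)$, and similar relations for $U,V,\xi$ where the convolution terms $P_1,\dots,P_5$ are re-expressed as integrals against the push-forward measure, i.e.\ of the form $\int_\mathbb{R} p(X(t,\beta)-X(t,\beta'))\,K(M,V,\xi)(t,\beta')\,d\beta'$.

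The core step is to verify that the resulting system is genuinely \emph{semilinear} in $(t,\beta)$: although $m_x$ may blow up in the original $(t,x)$ variables, the products $V=m_x/(1+m_x^2)$ and $\xi=1/(1+m_x^2)$ remain bounded and the kernel $p(x)=\frac12 e^{-|x|}$ is Lipschitz with $\|p\|_\infty,\|p'\|_\infty\le\frac12$. Combined with the conservation of the $H^1$ energy $\int m_x^2\,dx\le\beta\!-\!X$ (which is the content of working with conservative solutions), each nonlinear functional on the right-hand side is a globally Lipschitz function of $(X,U,M,V,\xi)$ in the Banach space $L^\infty\cap L^2$ with respect to $\beta$. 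A standard fixed-point/Gronwall argument then gives uniqueness for the auxiliary system with prescribed initial data obtained from $\bar m$.

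The main obstacle I anticipate is precisely the last point: bounding the difference of two nonlocal source terms along two candidate solutions of the auxiliary system in a Lipschitz way, because the convolution kernels are evaluated at the (possibly different) characteristic positions $X^{(1)}(t,\beta),X^{(2)}(t,\beta)$ and the measure may concentrate on singular sets where $\xi=0$. Handling this requires carefully splitting the integrals according to whether $\xi$ is positive or zero and exploiting the mean-value bound $|p(X^{(1)}-X^{(1)'})-p(X^{(2)}-X^{(2)'})|\le\frac12|X^{(1)}-X^{(2)}|+\frac12|X^{(1)'}-X^{(2)'}|$ together with the energy identity $\partial_\beta X+\partial_\beta E=1$. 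Once the Lipschitz estimate is secured, the conclusion $X^{(1)}\equiv X^{(2)}$, $U^{(1)}\equiv U^{(2)}$, etc., is immediate, and reading back through $m(t,x)=M(t,\beta(t,x))$ gives uniqueness of the conservative solution in the original variables; the corresponding statement for \eqref{eq0} then follows from $\gamma=(\partial_x-1)m$ as announced at the end of the introduction.
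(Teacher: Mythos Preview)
Your proposal is correct and follows essentially the same route as the paper: pass to the energy-adapted coordinate $\beta$ via \eqref{def1}, use Lemma~\ref{lemm2} to pin down a unique characteristic through each $\bar\beta$, rewrite the convolution terms $P_i$ as integrals over $\beta'$, and close a globally Lipschitz semilinear system whose uniqueness forces $m$ to be uniquely determined.  The only cosmetic difference is the parametrization of the gradient variable: the paper collapses your pair $(\xi,V)=\bigl((1+m_x^2)^{-1},\,m_x(1+m_x^2)^{-1}\bigr)$ into a single angle $v=2\arctan m_x$ via $\xi=\cos^2\tfrac v2$, $V=\tfrac12\sin v$, so that the final system \eqref{Ky1} involves $(\beta,x,m,v)$ rather than $(X,U,M,V,\xi)$; the Lipschitz verification and the conclusion are otherwise identical to what you outline.
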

	\begin{coro}
	For any initial data $\gamma_0\in L^2(\mathbb{R})$, the Cauchy problem \eqref{eq0} has a unique conservative solution
\end{coro}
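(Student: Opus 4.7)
The plan is to adapt the Bressan--Constantin characteristic method (as used for the Camassa--Holm equation and variational wave equations in \cite{book,WE2}) to the modified equation \eqref{ceg1}. Given any conservative solution $m$ of \eqref{ceg1} with initial data $\bar m\in H^1(\mathbb R)$, the strategy is to show that $m$ is completely determined by a set of auxiliary variables, expressed in the $(t,\beta)$ coordinates introduced after \eqref{def1}, which satisfy a semilinear ODE system with Lipschitz right-hand side. Since any two conservative solutions with the same initial datum produce the same auxiliary data at $t=0$ and obey the same semilinear system, they must coincide.

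First, I would fix a conservative solution $m$ and use Lemma \ref{lemm2} to obtain, for every $\bar\beta\in\mathbb R$, a unique characteristic curve $t\mapsto x(t,\bar\beta)$ with $x(t,\bar\beta)+\mu_{(t)}(]-\infty,x(t,\bar\beta)[)\leq \bar\beta \leq x(t,\bar\beta)+\mu_{(t)}(]-\infty,x(t,\bar\beta)])$, where the measure $\mu_{(t)}$ has absolutely continuous part $m_x^2\,dx$. Along such a curve I introduce the auxiliary variables
\[
    x(t,\beta),\qquad u(t,\beta):=u(t,x(t,\beta)),\qquad m(t,\beta):=m(t,x(t,\beta)),\qquad v(t,\beta):=\tfrac{\partial x}{\partial\beta}(t,\beta),
\]
together with an energy-type variable $q(t,\beta):=(1-v(t,\beta))$ that encodes the share of $m_x^2\,dx$ absorbed at $\beta$. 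The convolution quantities $P_1,\dots,P_5$ and $F,E$ in \eqref{ceg1}--\eqref{ceg3} can all be rewritten as integrals against $d\beta$ using the change of variables $dx+m_x^2\,dx = d\beta$, so that, for instance, $p\ast m(t,x(t,\beta))$ and $p\ast m_x^2(t,x(t,\beta))$ become bounded, Lipschitz functionals of $(x(\cdot),u(\cdot),m(\cdot),v(\cdot))$ on $L^\infty(\mathbb R,d\beta)\cap L^2$.

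The central step is to derive the ODE system: differentiating in $t$ along characteristics one obtains
\[
    \tfrac{d}{dt}x = u,\qquad \tfrac{d}{dt}m = F,\qquad \tfrac{d}{dt}v = \text{(linear in the unknowns)},\qquad \tfrac{d}{dt}u = \text{(bounded nonlinear term)},
\]
where the right-hand sides are expressed purely through the $\beta$-integral representations of $P_i$, $F$, $E$, $H$. The crucial point to verify, as in \cite{WE2,book}, is that after this change of variables the right-hand sides are globally Lipschitz in the state $(x,u,m,v)\in L^\infty\cap L^2$ on bounded energy sets — the singular term $m_x^2$ that obstructs Lipschitz dependence in the original variables is tamed because multiplication by $v=(1+m_x^2)^{-1}$ kills its singularity. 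Standard Banach fixed-point arguments then give uniqueness for the Cauchy problem of this semilinear system.

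Finally, I would close the loop: any conservative solution $m$ generates, via Lemma \ref{lemm2} and the definitions above, an admissible solution of the semilinear system with initial data determined solely by $\bar m$; conversely, the semilinear solution reconstructs a function $m(t,x)$ by inverting $\beta\mapsto x(t,\beta)$, and conservativeness of $m$ ensures the reconstruction agrees with the original $m$ a.e. Uniqueness of the semilinear system therefore forces uniqueness of $m$, hence of $\gamma=(\partial_x-1)m$, proving the corollary. The main obstacle I anticipate is the Lipschitz estimate for the right-hand side of the ODE system: the integrals defining $F$ and $E$ involve products such as $u m_x$ and $m_x^2$, and showing that, once expressed in $\beta$, they are Lipschitz functionals requires careful bookkeeping of which factors are bounded in $L^\infty(d\beta)$ versus $L^1(d\beta)$, together with a uniform bound $\|u(t,\cdot)\|_{W^{1,\infty}}\lm \|\bar m\|_{H^1}$ coming from $H^1$-conservation of the energy.
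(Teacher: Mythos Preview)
Your proposal is correct and follows essentially the same route as the paper: reduce to the $m$-equation \eqref{ceg1} via $m=(\partial_x-1)^{-1}\gamma$, pass to the $(t,\beta)$ coordinates of \eqref{def1}, show the auxiliary variables along characteristics satisfy a semilinear ODE system with Lipschitz right-hand side (the paper's system \eqref{qp37}--\eqref{Ky1} with data \eqref{K01}), and conclude uniqueness from the fixed-point argument. One small caveat on your parametrization: the pair $v=x_\beta$, $q=1-v$ is redundant and does not record the sign of $m_x$, so when you work out the details you will need either the angle variable $v=2\arctan m_x$ that the paper uses or the pair $(x_\beta,m_\beta)$ directly.
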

\section{Preliminary lemmas}
\begin{lemm}\label{lemm1}
	Let m=m(t,x) be a conservative solution of equation \eqref{ceg1}. Then, for every $t\geq 0$, the maps $\beta\rightarrow x(t,\beta)$ and $\beta\rightarrow m(t,\beta)\triangleq m(t,x(t,\beta))$ implicitly defined by \eqref{def1} are Lipschitz continuous with constant 1. The map t $\rightarrow x(t,\beta)$ is also Lipschitz continuous with constant 1. The map $t\rightarrow x(t,\beta)$ is also Lipschitz continuous with a constant deppending only on $\|m_0\|_{H^1}$
\end{lemm}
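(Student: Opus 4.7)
The plan is to handle the three Lipschitz bounds in turn. The first two drop out of the defining sandwich \eqref{def1} and basic Sobolev calculus on $m(t,\cdot)$, while the third reduces to producing a uniform $L^\infty$ estimate on the transport velocity $u$ in terms of $\|m_0\|_{H^1}$.

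First, for $\beta\mapsto x(t,\beta)$, I would fix $t$, take $\beta_1<\beta_2$, and set $x_i=x(t,\beta_i)$. Since the cumulative functions $x\mapsto x+\mu_{(t)}(]-\infty,x])$ and $x\mapsto x+\mu_{(t)}(]-\infty,x[)$ are non-decreasing, \eqref{def1} forces $x_1\le x_2$. Using the right-hand bound in \eqref{def1} for $\beta_1$ and the left-hand bound for $\beta_2$ yields
\[
\beta_2-\beta_1\;\ge\;(x_2-x_1)+\mu_{(t)}(\,]x_1,x_2[\,)\;\ge\;x_2-x_1,
\]
which is the Lipschitz-$1$ estimate. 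Next, for $\beta\mapsto m(t,\beta)$, the $H^1$ regularity of $m(t,\cdot)$ and Cauchy--Schwarz give
\[
|m(t,x_2)-m(t,x_1)|^2\;\le\;(x_2-x_1)\int_{x_1}^{x_2}m_x^2\,dx;
\]
since the absolutely continuous part of $\mu_{(t)}$ is $m_x^2\,dx$, the previous step refines to $(x_2-x_1)+\int_{x_1}^{x_2}m_x^2\,dx\le\beta_2-\beta_1$, and an AM--GM on the Cauchy--Schwarz product produces Lipschitz constant at most $\tfrac12\le 1$.

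For Lipschitz continuity in $t$, I would argue that the characteristic $t\mapsto x(t,\beta)$ satisfies $\dot x=u(t,x)$ in a suitable integrated sense derived from \eqref{ceg3}, so that the estimate reduces to $\sup_t\|u(t,\cdot)\|_{L^\infty}<\infty$. Substituting $u=m+P_1-P_{1x}+\tfrac{1}{2\lambda}(P_2-P_{2x}+P_3)$ and applying Young's inequality to each convolution with $p=\tfrac12 e^{-|x|}$ (or $p'$) against $m$, $m^2$, or $m_x^2$, together with the one-dimensional embedding $H^1\hookrightarrow L^\infty$ to bound the pointwise term $\|m\|_\infty$, gives $\|u(t)\|_\infty\lesssim\|m(t)\|_{H^1}+\|m(t)\|_{H^1}^2$. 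Since conservative solutions preserve the $H^1$ energy, $\|m(t)\|_{H^1}=\|m_0\|_{H^1}$, closing the velocity bound and yielding the time Lipschitz estimate with constant depending only on $\|m_0\|_{H^1}$.

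The main obstacle, in my view, is making the characteristic equation $\dot x=u(t,x)$ rigorous at times where $\mu_{(t)}$ carries a singular part, since \eqref{def1} then collapses a whole $\beta$-interval onto a single point in $x$ and one cannot simply differentiate the implicit relation. I would circumvent this by comparing $x(t_1,\beta)$ and $x(t_2,\beta)$ directly through the integrated balance law \eqref{ceg3}: integrating over the strip swept between the characteristics issuing from $\beta$ at the two times, the flux term contributes at most $\|u\|_\infty|t_2-t_1|$ while the $2E$ source is absolutely continuous in $t$, producing the required Lipschitz bound with constant depending only on $\|m_0\|_{H^1}$.
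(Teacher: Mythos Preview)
Your treatment of the two $\beta$-Lipschitz estimates matches the paper's argument; the only cosmetic difference is that the paper bounds $|m_x|\le\tfrac12(1+m_x^2)$ directly rather than passing through Cauchy--Schwarz, but both routes give Lipschitz constant $\tfrac12$.

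For the time-Lipschitz estimate, however, your primary line of attack is circular: the assertion that $t\mapsto x(t,\beta)$ satisfies $\dot x=u(t,x)$ is precisely the content of Lemma~\ref{lemm2}, whose proof \emph{uses} the Lipschitz continuity you are trying to establish here. Your acknowledged fallback via the balance law \eqref{ceg3} is the right instinct, but it is missing a concrete ingredient and the correct geometric picture. The paper does not integrate over any ``strip swept between characteristics''; instead it fixes $y=x(\tau,\beta)$ and reads off from the weak form of \eqref{ceg3} that
\[
\mu_{(t)}\bigl(\,]-\infty,\,y-C_\infty(t-\tau)[\,\bigr)\;\le\;\mu_{(\tau)}\bigl((-\infty,y)\bigr)+C_S\,(t-\tau),
\]
where $C_\infty$ bounds $\|u\|_{L^\infty}$ and $C_S$ bounds the spatial $L^1$ norm of the source term. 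Plugging this into the defining inequality \eqref{def1} at the barrier point $y^-(t)=y-(C_\infty+C_S)(t-\tau)$ yields $y^-(t)+\mu_{(t)}(]-\infty,y^-(t)[)\le\beta$, hence $x(t,\beta)\ge y^-(t)$; the upper barrier $y^+(t)$ is symmetric. So the Lipschitz constant is $C_\infty+C_S$, not $C_\infty$ alone: in addition to the $L^\infty$ bound on $u$ you need a uniform-in-time $L^1$ bound on the source in \eqref{ceg3}, which your proposal neither states nor proves (``absolutely continuous in $t$'' is not the relevant property). The paper devotes the bulk of the proof to deriving exactly these two constants from $\|m_0\|_{H^1}$ via Young-type convolution estimates on the $P_i$.
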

\begin{proof}
	Fix any time t$\geq 0$. Then the map 
	\begin{align}
		x\rightarrow \beta(t,x)\triangleq x+\int_{-\infty}^{x} m_x^2(t,y)dy
	\end{align}
is right continuous and strictly increasing. Hence it has a well defined, continuous, nondecreasing inverse $\beta\rightarrow x(t,\beta)$. If $\beta_1<\beta_2$, by \eqref{def1} we obtain
\begin{align}\label{def2}
	x(t,\beta_2)-x(t,\beta_1)+\mu_{(t)}(]-\infty,x(\beta_2)[)-\mu_{(t)}(]-\infty,x(\beta_1)])\leq \beta_2-\beta_1,
\end{align} 
which implies 
\begin{align}
	x(t,\beta_2)-x(t,\beta_1)\leq \beta_2-\beta_1
\end{align}
showing that the map $\beta \rightarrow x(t,\beta)$ is a contraction.

According to \eqref{def2}
\begin{align}\label{MMM}
	|m(t,x(t,\beta_2))-m(t,x(t,\beta_1))|&\leq\int_{x(t,\beta_1)}^{x(t,\beta_2)}|m_x|dx\leq \int_{x(t,\beta_1)}^{x(t,\beta_2)}\frac{1}{2}|1+m_x^2|dx
	\notag\\
		&=\frac{1}{2}(x(t,\beta_2)-x(t,\beta_1))+\int_{x(t,\beta_1)}^{x(t,\beta_2)}\frac{1}{2}|m_x^2|dx
		\notag\\
		&\leq \frac{1}{2}(x(t,\beta_2)-x(t,\beta_1))+\frac{1}{2}\mu_{(t)}(]x(t,\beta_1),x(t,\beta_2)[)
		\notag\\
		&\leq \frac{1}{2}(\beta_2-\beta_1).
\end{align}
Next we prove the Lipschitz continuity of the map $t\rightarrow x(t,\beta)$. By Sobolev's inequality, we have
\begin{align}
	\|m\|_{L^\infty}&\leq \|m\|_{H^1}.
\end{align}
Using Young's inequality, we get
\begin{align}
	\|P_{1}\|_{L^\infty} &\leq \|e^{-|\cdot|}\|_{L^1}\|m\|_{L^\infty}\leq C\|m\|_{H^1},~~~
	\|P_{1x}\|_{L^\infty} \leq \|sgn(\cdot)e^{-|\cdot|}\|_{L^1}\|m\|_{L^\infty}\leq C\|m\|_{H^1},\label{p1}
	\\
	\|P_{2}\|_{L^\infty} &\leq \|e^{-|\cdot|}\|_{L^\infty}\|m^2\|_{L^1}\leq C\|m\|_{H^1},~~~
	\|P_{2x}\|_{L^\infty} \leq \|sgn(\cdot)e^{-|\cdot|}\|_{L^\infty}\|m^2\|_{L^1}\leq C\|m\|_{H^1}, \label{p2}
	\\
	\|P_{3}\|_{L^\infty} &\leq \|e^{-|\cdot|}\|_{L^\infty}\|m_x^2\|_{L^1}\leq C\|m\|_{H^1},~~~
	\|P_{3x}\|_{L^\infty} \leq \|sgn(\cdot)e^{-|\cdot|}\|_{L^\infty}\|m_x^2\|_{L^1}\leq C\|m\|_{H^1}, \label{p3}
\end{align}
From \eqref{p1}-\eqref{p3}, we can deduce that
\begin{align}
	\|u\|_{L^\infty}\leq C_{\lambda}\|m\|_{H^1} \triangleq C_\infty.
\end{align}
Similarly 
\begin{align}
	\|P_{5}\|_{L^\infty} &\leq \|e^{-|\cdot|}\|_{L^1}\|P_3\|_{L^\infty}\leq C\|m\|_{H^1},~~~
	\|P_{5x}\|_{L^\infty} \leq \|sgn(\cdot)e^{-|\cdot|}\|_{L^1}\|P_3\|_{L^\infty}\leq C\|m\|_{H^1}. \label{p5}
\end{align}
To estimate $\|P_4\|_{L^\infty}$, we can see
\begin{align}
	\|P_4\|_{L^\infty}=\|p\ast H\|_{L^\infty} \leq \|e^{|\cdot|}\|_{L^\infty}\|um_x\|_{L^1}+\|e^{|\cdot|}\|_{L^1}\|um\|_{L^\infty}.
\end{align}
Then the main diffculty that confronts us is the estimation of the term $\|um_x\|_{L^1}$. 

Firstly, by Young's inequality
\begin{align}
	\|m_xP_3\|_{L^1} &\leq C\|(1+\frac{m_x^2}{2})G^{-1}m_x^2\|_{L^1}\leq C(\|G^{-1}m_x^2\|_{L^1}+ \|\frac{m_x^2}{2}G^{-1}m_x^2\|_{L^1})
	\notag\\
	&\leq C\|e^{-|\cdot|}\|_{L^\infty}\|m_x^2\|_{L^1}+\|\frac{m_x^2}{2}\|_{L^1}\|G^{-1}m_x^2\|_{L^\infty}
	\notag\\
	&\leq C(\|m_x^2\|_{L^1}+\|m_x^2\|_{L^1}\|e^{|\cdot|}\|_{L^\infty}\|m_x^2\|_{L^1})
	\notag\\
	&\leq C(\|m\|_{H^1}+\|m\|_{H^1}^2).
\end{align}
Similarly we have 
\begin{align}
    \|m_xP_{2}\|_{L^1} ,\|m_xP_{2x}\|_{L^1},\|m_xP_1\|_{L^1},\|m_xP_{1x}\|_{L^1} \leq C\|m\|_{H^1}^2.\label{mx}
\end{align}
Then one obtains
\begin{align}
	\|um_x\|_{L^1}\leq C(\|m\|_{H^1}+\|m\|_{H^1}^2).
\end{align}
Using the same mothed, we can deduce that
\begin{align}
	\|P_4\|_{L^\infty}\leq C(\|m\|_{H^1}+\|m\|_{H^1}^2),~~~	\|P_{4x}\|_{L^\infty}\leq C(\|m\|_{H^1}+\|m\|_{H^1}^2).\label{p4}
\end{align}
Next, by Young's inequality, we have 
\begin{align}
	\|P_1\|_{L^1},~\|P_2\|_{L^1},~\|P_3\|_{L^1},~\|P_{3x}\|_{L^1},~\|P_5\|_{L^1},~\|P_{5x}\|_{L^1}\leq C\|m\|_{H^1},\label{u0}
\end{align}
and 
\begin{align}
	&\|P_2\|_{L^2}\leq C\|e^{-|\cdot|}\|_{L^2}\|m^2\|_{L^1}\leq C\|m\|_{H^1}&,
	\\
	~~~&\|P_{2x}\|_{L^2}\leq C\| sgn(\cdot)e^{-|\cdot|}\|_{L^2}\|m^2\|_{L^1}\leq C\|m\|_{H^1}&\label{u1}
	\\
	&\|P_3\|_{L^2}\leq C\|m\|_{H^1}&.\label{u2}
\end{align}
From \eqref{u1} and \eqref{u2}, one obtains
\begin{align}
	\|u\|_{L^2}\leq C\|m\|_{H^1}.\label{u}
\end{align}
Similarly, 
\begin{align}
    \|P_4\|_{L^1},\|P_{4x}\|_{L^1}\leq \|H\|_{L^1}\leq C(\|m\|_{H^1}+\|m\|_{H^1}^2).\label{u3}
\end{align}
By \eqref{u0}-\eqref{u3}, we can deduce that there exists a constant C depending on $\lambda$ such that
\begin{align}
	\|F\|_{L^1}\leq C(\|m\|_{H^1}+\|m\|_{H^1}^2)\triangleq C_S.
\end{align}
Assuming that $t>\tau$ and y=x($\tau$,$\beta$), we obtain
\begin{align}
	\mu_{(t)}(]-\infty,y-C_\infty(t-\tau)[)&=
		\mu_{(\tau)}((-\infty,y))-\mu_{(\tau)}((-\infty,y))+\mu_{(\tau)}((-\infty,x(t,\beta)))
		\notag\\
	&~~~+\mu_{(t)}((-\infty,y))-\mu_{(t)}((-\infty,y))-\mu_{(t)}((y-C_\infty(t-\tau),y))
	\notag\\
	&\leq \mu_{(\tau)}((-\infty,y))+\mu_{(\tau)}((-\infty,x(t,\beta)))-\mu_{(\tau)}(]-\infty,y[)
	\notag\\
		&~~~+\mu_{(t)}((x(t,\beta),y))-\mu_{(t)}\mu_{(t)}((y-C_\infty(t-\tau),y))
		\notag\\
		&\leq \mu_{(\tau)}((-\infty,y))+\int_{\tau}^{t}\|F\|_{L^1}dt^\prime\leq \mu_{(\tau)}((-\infty,y))+C_S(t-\tau).
\end{align}
Let $y^-(t)\triangleq y-(C_\infty+C_S)(t-\tau)$, we get
\begin{align}
	y^-+\mu_{(t)}(]-\infty,y^-(t)[)&\leq y-(C_\infty-C_S)(t-\tau)+\mu_{(\tau)}+C_S(t-\tau)
	\notag\\
	&\leq y-\mu_{(t)}(]-\infty,y[)\leq \beta
\end{align}
Which implies $$x(t,\beta)\geq y^-(t)=y-(C_\infty+C_S)(t-\tau).$$
Likewise, if we define  $y^+(t)\triangleq y+(C_\infty+C_S)(t-\tau)$, we can deduce that $$x(t,\beta)\leq y^+(t)=y+(C_\infty+C_S)(t-\tau).$$
\end{proof}

\begin{lemm}\label{lemm2}
	Let $m(t,x)\in H^1(\mathbb{R})$ be a conservative solution of the Cauchy problem \eqref{ceg1}. Then, for
	any $\bar{y}\in \mathbb{R}$, there exists a unique Lipschitz continuous map $t\mapsto x(t)$ which satisfies
	both \eqref{ceg4} and \eqref{F1}. Moreover, for any $0\leq t\leq \tau$ one has
	\begin{align}\label{3.2}
		m(t,x(t))-m(\tau,x(\tau))=-\int_{\tau}^{t}F(s,x(s))ds.
	\end{align}
\end{lemm}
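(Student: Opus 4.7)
The plan is to single out the characteristic through $\bar y$ as a level set of the auxiliary coordinate $\beta$ built in Lemma \ref{lemm1}. I set $\bar\beta := \bar y + \mu_{(0)}((-\infty,\bar y])$ (or, when $\mu_{(0)}$ has an atom at $\bar y$, pick any value in the interval prescribed by \eqref{def1}), and define the candidate curve $x(t) := x(t,\bar\beta)$. By Lemma \ref{lemm1} this is automatically Lipschitz in $t$ with a constant depending only on $\|m_0\|_{H^1}$. The working hypothesis is that \eqref{F1} and \eqref{ceg4} together form a selection principle that picks out precisely the level set $\{\beta=\bar\beta\}$: indeed, the ODE $\dot x = u$ may fail to have unique solutions since $u$ is merely continuous, and \eqref{ceg4} prevents any concentrated energy from leaking across the curve.

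To verify that $x(t)$ satisfies both \eqref{F1} and \eqref{ceg4}, I would work first at times $t$ for which $\mu_{(t)}$ carries no atom at $x(t,\bar\beta)$. At such $t$, differentiating the defining relation $x(t,\bar\beta)+\int_{-\infty}^{x(t,\bar\beta)} m_x^2(t,\xi)\,d\xi = \bar\beta$ in combination with the distributional balance \eqref{ceg3} (tested against a smooth approximation of $\mathbf{1}_{(-\infty,x(t,\bar\beta))}$) produces exactly the two identities required, after cancellation of the flux boundary term $u\,m_x^2$. The remaining, at most countable, set of times where $\mu_{(t)}$ has an atom at $x(t,\bar\beta)$ is handled by continuity, using the uniform bounds $\|u\|_{L^\infty}\leq C_\infty$ and $\|F\|_{L^1}\leq C_S$ from Lemma \ref{lemm1}.

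Once \eqref{F1} is known, the identity \eqref{3.2} follows quickly: $t\mapsto m(t,x(t))$ is Lipschitz by Lemma \ref{lemm1}, hence almost everywhere differentiable, and at such points the chain rule combined with \eqref{ceg1} gives $\frac{d}{dt}m(t,x(t)) = m_t + u\,m_x = F(t,x(t))$, which integrates to \eqref{3.2}. For uniqueness, given two Lipschitz solutions $y_1,y_2$ of \eqref{F1}--\eqref{ceg4} with $y_i(0)=\bar y$, I would show that $\beta_i(t):= y_i(t)+\mu_{(t)}((-\infty,y_i(t)))$ is constant for each $i$; this is essentially the converse of the verification step above, since the same computation shows that the pair \eqref{F1}, \eqref{ceg4} forces $\dot\beta_i\equiv 0$ modulo an atomic-time correction. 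Because $\beta_1(0)=\beta_2(0)$, both curves are selections of the same level set of $\beta$ and must therefore coincide.

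The main obstacle is the verification step: one has to make rigorous the distributional differentiation of the composition $\beta(t,y(t))$ in the presence of a possibly singular or atomic part of $\mu_{(t)}$, using only the weak form \eqref{bal} of the balance law together with the mere continuity of $u$. The key technical device will be a careful smooth-cutoff/mollification argument in the spatial variable, combined with Fubini to pass from the distributional identity on $\mathbb{R}^2$ to a pointwise-a.e. identity in $t$. Once this approximation is controlled by the $L^\infty$ and $L^1$ estimates of Lemma \ref{lemm1}, both the existence verification and the uniqueness argument fall out of the same computation.
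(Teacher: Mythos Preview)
Your proposal rests on the claim that characteristics are level sets of the coordinate $\beta$, i.e.\ that $\beta$ is constant along solutions of \eqref{F1}--\eqref{ceg4}. This is false for the equation at hand. If $x(\cdot)$ satisfies $\dot x = u$, then differentiating $\beta(t)=x(t)+\int_{-\infty}^{x(t)} m_x^2\,d\xi$ and using the balance law \eqref{ceg3} gives
\[
\dot\beta(t)=u(t,x(t))+\int_{-\infty}^{x(t)}2E(t,\xi)\,d\xi,
\]
which is generically nonzero since the source $E$ does not vanish. Consequently the curve $t\mapsto x(t,\bar\beta)$ with \emph{fixed} $\bar\beta$ does not solve \eqref{F1}: your ``cancellation of the flux boundary term $u\,m_x^2$'' leaves the residual $u+\int 2E$, not $0$. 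The same error invalidates the uniqueness argument, where you try to show $\dot\beta_i\equiv 0$.

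The paper's approach accounts for this drift: it writes the candidate characteristic as $x(t)=x(t,\beta(t))$ with $\beta(\cdot)$ unknown, and derives from \eqref{F1}$+$\eqref{ceg4} the integral equation $\beta(t)=\bar\beta+\int_0^t G(s,\beta(s))\,ds$ with $G(t,\beta)=u(t,x(t,\beta))+\int_{-\infty}^{x(t,\beta)}E\,dx$. The heart of the proof is to show that $G$ is globally Lipschitz in $\beta$ (because $G_\beta=(u_x+E)/(1+m_x^2)$ is bounded, which requires the detailed estimates on $E$), so a contraction argument yields a unique $\beta(\cdot)$. One must then still verify that the resulting $x(t)=x(t,\beta(t))$ actually satisfies $\dot x=u$ pointwise a.e.; this is done by a contradiction argument testing the weak balance law \eqref{bal} against carefully chosen Lipschitz cutoffs. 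The identity \eqref{3.2} is likewise obtained by a test-function argument against the weak form of \eqref{ceg1}, not by a pointwise chain rule (which is delicate since $m_t$ and $m_x$ are only in $L^2$). Your outline misses both the ODE for $\beta$ and the need for these weak-formulation verifications.
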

\begin{proof}
	\begin{itemize}
		\item [1.]	Using the adapted coordinates $(t,\beta)$ as in \eqref{ceg4}, we asumme that x(t) is the  charateristic starting at $\bar{y}$ in the form of $t\mapsto x(t)=x(t,\beta(t))$, where $\beta(t)$ is a map to be determined. We sum \eqref{ceg4} and \eqref{F1}, and integrating w.r.t. time we obtain
		\begin{align}\label{21}
			x(t)+\int_{-\infty}^{x(t)}m_x^2(t,x)dx=\bar{y}+\int_{-\infty}^{\bar{y}}m_{0,x}^2dx+\int_{0}^{t}(u(t,x(t))+\int_{-\infty}^{x(t)}Edx)dt.
		\end{align}
		Introducing the function 
		\begin{align}\label{G}
			G(t,\beta)\triangleq \int_{-\infty}^{x(t,\beta)} u_x+Edx
		\end{align}
		and the constant
		\begin{align}
			\bar{\beta}=\bar{y}+\int_{-\infty}^{\bar{y}}m_{0,x}^2dx,
		\end{align}
		we can rewrite the equation \eqref{21} as the form of 
		\begin{align}
			\beta(t)=\bar{\beta}+\int_{0}^{t}G(s,\beta(s))ds\label{beta}.
		\end{align} 
	\item[2.] Next, we cliam that For each fixed $t\geq 0$, the function $\beta \mapsto G(t,\beta)$ defined at \eqref{G} is uniformly bounded and absolutely continuous. Moreover,
	\begin{align}
		G_\beta=[u_x+E]x_\beta=\frac{u_x+E}{1+m_x^2}\in [-C,C]
	\end{align}
	for some constant C depending only on the norm of $m$ and $\lambda$.
	
	To prove the claim, we notice that 
	\begin{align}
		\|G_\beta\|_{L^\infty}=\|\frac{u_x+E}{1+m_x^2}\|_{L^\infty}\leq\|\frac{u_x}{1+m_x^2}\|_{L^\infty}+\|\frac{E}{1+m_x^2}\|_{L^\infty}.
	\end{align}
	By \eqref{p1}-\eqref{p3}, we can deduce that 
	\begin{align}
		\|\frac{u_x}{1+m_x^2}\|_{L^\infty} \leq C.
	\end{align}
	Using the fact that 
	\begin{align}
		\frac{m_x}{1+m_x^2},\frac{m_x^2}{1+m_x^2}\leq 1,
	\end{align}
	it is not hard to check that for any $i=1,2,3,4,5$, the following inequalities holds
	\begin{align}
		\|\frac{m_xP_i}{1+m_x^2}\|_{L^\infty},\|\frac{m_xP_{ix}}{1+m_x^2}\|_{L^\infty} \leq C,\label{P1}
	\end{align}
	and by \eqref{p1}-\eqref{p3} and \eqref{mx},we can deduce that 
	\begin{align}
		\|\frac{mu_xm_x}{1+m_x^2}\|_{L^\infty}&\leq C\|m\|_{L^\infty}\|\frac{m_x^2-mm_x+\frac{1}{2\lambda}(P_{2x}+P_3-m^2-P_2)m_x}{1+m_x^2}\|_{L^\infty}
		\notag\\
		&\leq C\|m\|_{L^\infty} (\|\frac{m_x^2}{1+m_x^2}\|_{L^\infty}+\|m\|_{L^\infty}\|\frac{m_x}{1+m_x^2} \|_{L^\infty}
		\notag\\
		&~~~+\frac{1}{2\lambda}\|P_{2x}+P_3-m^2-P_2\|_{L^\infty}\|\frac{m_x}{1+m_x^2} \|_{L^\infty}
		\notag\\
		&\leq C.\label{M1}
	\end{align}
	Moreover, one can derive that
	\begin{align}
		\|\frac{m_x(P_{5x}+P_{5xx})}{1+m_x^2}\|_{L^\infty}&=\|\frac{m_x(\partial_x^2+\partial_x)G^{-1}G^{-1}m_x^2}{1+m_x^2}\|_{L^\infty}
		\notag\\
		&\leq C\|G^{-1}m_x^2+G^{-1}G^{-1}m_x^2+\partial_xG^{-1}G^{-1}m_x^2\|_{L^\infty}
		\notag\\
		&\leq C\|m_x^2\|_{L^1}\leq C,
		\end{align}
	and
	\begin{align}
		\|\frac{Hm_x}{1+m_x^2}\|_{L^\infty}&\leq	\|\frac{um_x^2}{1+m_x^2}\|_{L^\infty}	+\|\frac{umm_x}{1+m_x^2}\|_{L^\infty}
		\notag \\
		&\leq C(\|u\|_{L^\infty}+\|u\|_{L^\infty}\|m\|_{L^\infty}).\label{H1}
	\end{align}
	Combining \eqref{P1}-\eqref{H1}, we conclude that
	\begin{align}
		\|\frac{E}{1+m_x^2}\|_{L^\infty}\leq C,
	\end{align}
	which completed the proof of our claim.
	
	Hence the function $G$ in \eqref{G} is uniformly Lipschitz continuous w.r.t. $\beta$.
	\item[3.] Thanks to the Lipschitz continuity of the function G, the existence of a unique solution to the integral equation \eqref{beta} can be proved by a standard fixed point argument. Namely, consider the Banach space of all continuous functions $\beta: \mathbb{R}_+\mapsto \mathbb{R}$ with weighted norm $$\|\beta\|_\star\triangleq \sup_{t\geq 0}e^{-2Ct}|\beta(t)|.$$
	On this space, we claim that the Picard map 
	\begin{align}
		\mathscr{P}\beta(t)\triangleq \bar{\beta}+\int_{0}^{t}G(\tau,\beta(\tau))d\tau
	\end{align}
is a strict contraction. Indeed, assume $\|\beta-\tilde{\beta}\|=\delta>0$. This implies 
\begin{align}
	|\beta(\tau)-\beta(\tau)|\leq \delta e^{2C\tau}
\end{align}
Hence
\begin{align}
	|\mathscr{P}\beta(t)-\mathscr{P}\tilde{\beta}(t)|&=|\int_{0}^{t}(G(\tau,\beta(\tau))-G(\tau,\tilde{\beta(\tau)}))d\tau|
	\notag\\
	&\leq C\int_{0}^{t}|\beta(\tau)-\tilde{\beta(\tau)}|d\tau\leq\int_{0}^{t}C\delta e^{2C\tau}d\tau\leq \frac{\delta}{2}e^{2Ct}.
\end{align}
Then we conclude $\|\mathscr{P}\beta-\mathscr{P}\tilde{\beta}\|_\star\leq \frac{\delta}{2}$.The contraction mapping principle guarantees that \eqref{beta} has
a unique solution. Thanks to the arbitrary of the $T$, we infer that the integral equation \eqref{beta} has a
unique solution on $\mathbb{R}_+$.
\item[4] By the previous construction, the map $t\mapsto x(t)\triangleq x(t,\beta(t))$ defined by \eqref{def1} provides the unique solution to \eqref{beta}. Due to the Lipschitz continuity of $\beta(t)$ and x(t)=x(t,$\beta(t)$), $\beta(t)$ and $x(t)$ are differentiable almost everywhere, so we only have to consider the time where $x(t)$ is differentiable.
 It suffices to show that \eqref{F1} holds at almost every time. Assume, on the contrary, $\dot{x}(\tau)\neq u(\tau,x(\tau))$. Without loss of generality, let 
 \begin{align}
 	\dot{x}(\tau)=u(\tau,x(\tau))+2\epsilon_0
 \end{align}
for some $\epsilon_0>0$. The case $\epsilon_0<0$ is entirely similar. To derive a contradiction we observe that for all $t\in (\tau,\tau+\delta)$, with $\delta>0$ small enough one has 
\begin{align}
	x^+(t)\triangleq x(\tau)+(t-\tau)[u(\tau,x(\tau))+\epsilon_0]<x(t).
\end{align} 
We also observe that if $\phi$ is Lipschitz continuous with compact support, then the approximation argument guarantees that
\eqref{bal}  remain hold for any test function $\phi\in H^1(\mathbb{R})$ with compact support.	
For any $\epsilon>0$  enough small, we give the following functions
\begin{equation}\label{eqa17}
	\varrho^{\varepsilon}(s,x)=\left\{
	\begin{array}{rcl}
		0, &y\leq -{\varepsilon}^{-1},\\
		x+{\varepsilon}^{-1}, & ~~-{\varepsilon}^{-1}\leq y \leq 1-{\varepsilon}^{-1},\\
		1, & ~~1-{\varepsilon}^{-1}\leq y \leq y^+(x),\\
		1-{\varepsilon}^{-1}(x-x(s)),  &~~~~~~ y^+(s)\leq y\leq y^+(s)+\varepsilon,\\
		0, & ~~~~~~y\geq y^+(s)+\varepsilon,
	\end{array} \right.
\end{equation}
\begin{equation}\label{eqa18}
	\chi^{\varepsilon}(s)=\left\{
	\begin{array}{rcl}
		0, &~~~~~~~~~~s\leq \tau-{\varepsilon}^{-1},\\
		{\varepsilon}^{-1}(s-\tau+\varepsilon), & ~~\tau-{\varepsilon}\leq s \leq \tau,\\
			1, & ~~\tau \leq s \leq t,\\
		1-{\varepsilon}^{-1}(s-t),  &~~ t\leq s\leq t+\varepsilon,\\
		0, & ~~~~~~~s\geq t+\varepsilon.
	\end{array} \right.
\end{equation}
Define
\begin{align*}
	\phi^{\varepsilon}(s,x)=\min\{ {\varrho^{\varepsilon}(s,x),	\chi^{\varepsilon}(s)}\}.
\end{align*}
Using $\phi^\epsilon$ as test function in \eqref{bal} we obtain
\begin{align}\label{bb}
	\int_{0}^{\infty}\int [m_x^2\phi^\epsilon_t+um_x^2\phi^\epsilon_x+2E\phi^\epsilon]dxdt+\int m_{0,x}^2\phi^\epsilon(0,x)dx =0.
\end{align}
	If $t$ is sufficiently close to $\tau,$ we have
\begin{align*}
	\lim\limits_{\varepsilon\rightarrow 0}\int_{\tau}^{t}\int_{x^+(s)-\varepsilon}^{x^+(s)+\varepsilon}m_x^2\phi_t^\epsilon +(um_x^2)\phi_x^\epsilon +2E\phi^\epsilon dyds\geq 0.
\end{align*}
	Using the fact that $m(s,x) <m(\tau)+{\epsilon}_0$   and ${\phi}_x^{\epsilon}\leq 0.$ For any $s\in[\tau+\varepsilon,t-\varepsilon],$ we infer that
\begin{align}\label{5.139}
	0={\phi}_t^{\varepsilon}+[u(\tau,x(\tau))+\epsilon_0]{\phi}_x^{\varepsilon}\leq {\phi}_t^{\varepsilon}+(u(s,x(s))){\phi}_x^{\varepsilon}.
\end{align}
Noticing that the family of measures
${\mu}_t$ depends continuously on $t$ in the topology of weak convergence, taking the limit of \eqref{bb} as $\epsilon\rightarrow 0$, we obtain
\begin{align}\label{37}
	0=&\int_{-\infty}^{x(\tau)}m^2_x(\tau,y)dy-\int_{-\infty}^{x^+(t)}m^2_x(t,x)dx+\int_{\tau}^{t}\int_{-\infty}^{x^+(s)}2E\phi^{\varepsilon}(s,x)dxds\notag\\
	&+\lim\limits_{\varepsilon\rightarrow 0}	\int_{\tau}^t\int_{{x^+}(s)-\varepsilon}^{{x^+}(s)+\varepsilon}m_x^2(\phi_t^\epsilon+u\phi_x^\epsilon)(s,x)dxds\notag\\
	\geq&\int_{-\infty}^{x(\tau)}m^2_x(\tau,x)dx-\int_{-\infty}^{x^+(t)}m^2_x(t,x)dx+\int_{\tau}^{t}\int_{-\infty}^{x^+(s)}2E(s,x)dxds.
\end{align}
In turn,\eqref{37} implies 
\begin{align}
	\int_{-\infty}^{x^+(t)}m^2_x(t,x)dx&\geq \int_{-\infty}^{x(\tau)}m^2_x(\tau,x)dx+\int_{\tau}^{t}\int_{-\infty}^{x^+(s)}2E(s,x)dxds.\\
	&=\int_{-\infty}^{x(\tau)}m^2_x(\tau,x)dx+\int_{\tau}^{t}\int_{-\infty}^{x(s)}2E(s,x)dxds+o_1(t-\tau).
\end{align}
Notice that the last term is a higher order infinitesimal, satisfying $\frac{o_1(t-\tau)}{t-\tau} \rightarrow 0$ as $t\rightarrow \tau$. Indeed
\begin{align}
	|o_1(t-\tau)|=\int_{\tau}^{t}\int_{-\infty}^{x(s)}2E(s,x)dxds.
\end{align} 
For simplicity, we only show the estimate of the term $(mu)_xm_x$ in E. Firstly, we have
\begin{align}
	\int_{\tau}^{t}\int_{x^+(s)}^{x(s)}(um_x^2)dxds&\leq \int_{\tau}^{t}\|u\|_{L^\infty}\int_{x(s)}^{x^+(s)}m_x^2dxds\leq\|u\|_{L^\infty}\int_{x^+(s)}^{x(s)}m_x^2dxds
	\notag\\
	& \leq \|u\|_{H^1}\int_{\tau}^{t}\|m_x^2\|_{L^1}(x(s)-x^+(s))ds
	\leq C(t-\tau)^2.
\end{align}
Similarly
\begin{align}
	\int_{\tau}^{t}\int_{x(s)}^{x^+(s)}(-mu_xm_x)dxds&= \int_{\tau}^{t}\int_{x(s)}^{x^+(s)}-m[m_x-m+\frac{1}{2\lambda}(G^{-1}m_x^2+G^{-1}m^2-\partial G^{-1}m^2)]m_xdxds
	\notag\\
	&\leq \|m\|_{H^1}\int_{\tau}^{t}\|m_x^2\|_{L^1}(x(s)-x^+(s))ds
	\notag\\
	&+(\|m\|_{H^1}^2+\|m\|_{H^1}^3)\int_{\tau}^{t}\|m_x\|_{L^2}(x(s)-x^+(s))^{\frac{1}{2}}ds
	\notag\\
	&\leq C(t-\tau)^{\frac{3}{2}}+C(t-\tau)^2.
\end{align}
Thus we obtain that
\begin{align}
	|\circ_1(t-\tau)|\leq C((t-\tau)^\frac{3}{2}+(t-\tau)^2).
\end{align}
For $t$ sufficiently close to $\tau$, we have
\begin{align}
	\beta(t)=&\beta(\tau)+(t-\tau)[u(\tau,x(\tau))+\int_{-\infty}^{x(\tau)}Fdx]+o_2(t-\tau)
	\notag\\
	=& x(t)+\int_{-\infty}^{x(t)}m_x^2dx=x(t)+\mu_{(t)}(-\infty,x(t))
	\notag\\
	>&x(\tau)+(t-\tau)[u(\tau)+\epsilon_0]+\mu_{(t)}(-\infty,x^+(t))
	\notag\\
	\geq&x(\tau)+(t-\tau)[u(\tau)+\epsilon_0]+\mu_{(t)}(-\infty,x(t))
	+\int_{\tau}^{t}\int_{-\infty}^{x(s)}Fdxds+o_1(t-\tau).
\end{align} 
We can deduce that 
\begin{align}
	\beta(\tau)&+(t-\tau)[u(\tau,x(\tau))+\int_{-\infty}^{x(\tau)}Fdx]+o_2(t-\tau)
	\notag\\
	\geq &[x(\tau)+\int_{-\infty}^{x(\tau)}m_x^2dx]+(t-\tau)[u(\tau,x(\tau))+\epsilon_0]+\int_{\tau}^{t}\int_{-\infty}^{x(s)}Fdxds+o_1(t-\tau).
\end{align}
Subtracting common terms, dividing both sides by $t-\tau$ and letting $t\rightarrow\tau$, we achieve a contradiction. 
Therefore, \eqref{F1} must hold.
\item[5] We now prove \eqref{3.2}. For every test function $\phi \in \mathbb{C}^\infty_c(\mathbb{R}^2)$, one has
\begin{align}
	\int_{0}^{\infty}\int m\phi_t-um_x\phi+F\phi dxdt+\int m_0\phi(0,x)dx=0.
\end{align}
Given any $\psi \in \mathbb{C}_c^\infty$, let $\phi=\psi_x$. Since the map $x \mapsto m(t,x)$ is absolutely continuous, we can intergrate by parts w.r.t. x and obtain
\begin{align}\label{psi1}
	\int_{0}^{\infty}\int m_x\psi+um_x\psi_x+F\psi_xdxdt+\int \bar{m}\psi dx=0.
\end{align}
For any  $\epsilon\geq0$  sufficiently small, we give the following function
\begin{equation*}
	\varrho^{\varepsilon}(s,x)=\left\{
	\begin{array}{rcl}
		0, &~~~~y\leq -{\varepsilon}^{-1},\\
		x+{\varepsilon}^{-1}, &~~~~~~ -{\varepsilon}^{-1}\leq x \leq 1-{\varepsilon}^{-1},\\
		1, &1-{\varepsilon}^{-1}\leq x\leq x(s),\\
		1-{\varepsilon}^{-1}(x-x(s)),  & ~~~~~~~~~x(s)\leq x\leq x(s)+\varepsilon,\\
		0 & ~~~~~~~~x\geq x(s)+\varepsilon,
	\end{array} \right.
\end{equation*}
and
\begin{align*}
	\psi^{\varepsilon}(s,x)=\min\{ {\varrho^{\varepsilon}(s,x),	\chi^{\varepsilon}(s)}\},
\end{align*}
We now use the test function $\phi=\psi^\epsilon$ in \eqref{psi1} and let $\epsilon \mapsto 0$. 
\begin{align}
	\int_{-\infty}^{x(t)}m_x(t,x)dx=\int_{-\infty}^{x(\tau)}m_x(\tau,x)dx-\int_{\tau}^{t}F(s,x(s))ds+\lim_{\epsilon\rightarrow0}\int_{\tau-\epsilon}^{t+\epsilon}\int_{x(s)}^{x(s)+\epsilon}m_x(\psi_t^\epsilon+u\psi_x^\epsilon)dxds
\end{align}
	Hence, it is shown that
\begin{align}
	&\lim\limits_{\varepsilon\rightarrow 0}	\int_{\tau-\varepsilon}^{t+\varepsilon}\int_{{x}(s)}^{{x}(s)+\varepsilon}m_x(\psi_t^\epsilon+u\psi_x^\epsilon)dyds\notag\\
	&=\lim\limits_{\varepsilon\rightarrow 0}	\Big(\int_{\tau-\varepsilon}^{\tau}+\int_{\tau}^{t}+\int_{t}^{t+\varepsilon}\Big)\int_{{x}(s)}^{{x}(s)+\varepsilon}m_x(\psi_t^\epsilon+u\psi_x^\epsilon)dyds=0.
\end{align}
Taking advantage of  Cauchy's inequality and $m_x\in L^2$, one has
\begin{align*}
	&|\int_{\tau}^{t}\int_{{x}(s)}^{{x}(s)+\varepsilon}m_x[{\psi}_t^{\varepsilon}+u{\psi}_x^{\varepsilon}]dxds|\notag\\&\leq \int_{\tau}^{t}\Big(\int_{{x}(s)}^{{x}(s)+\varepsilon}|m_x|^2dy\Big)^{\frac{1}{2}}	\Big(\int_{{x}(s)}^{{x}(s)+\varepsilon}[{\psi}_t^{\varepsilon}+u{\psi}_x^{\varepsilon}]^2dx\Big)^{\frac{1}{2}}ds.
\end{align*}
For each $\epsilon>0$ consider the function 
\begin{align}
	\eta_\epsilon(s)\triangleq (\sup_{x\in \mathbb{R}}\int_{x}^{x+\epsilon}m_x^2(s,y)dy)^\frac{1}{2}.
\end{align}
Observe that all functions $\eta_\epsilon$ are uniformly bounded. By the dominated convergence theorem,
\begin{align}\label{con1}
\lim_{\epsilon\rightarrow0}\int_{\tau}^{t}(\int_{x}^{x+\epsilon}m_x^2(s,y)dy)^\frac{1}{2}ds\leq \lim_{\epsilon\rightarrow0}\int_{\tau}^{t}\eta_\epsilon(s)ds=0
\end{align}
For $s\in[\tau,t]$, we construct the function 
\begin{align}
	\psi^\epsilon_x(s,y)=-\epsilon^{-1},
\end{align}
and 
\begin{align}
	\psi_t^\epsilon(s,y)+u(s,x(s))\psi_x^\epsilon(s,y)=0~~~~,~~~~x(s)<y<x(s)+\epsilon.
\end{align}
This implies
\begin{align}\label{con2}
	&\int_{x(s)+\epsilon}^{x(s)}|\psi_t^\epsilon(s,y)+u(s,y)\psi^\epsilon_x(s,y)|^2dy=\epsilon^{-2}\int_{x(s)}^{x(s)+\epsilon}|u(s,y)-u(s,x(s))|^2dy 
	\notag\\
	&\leq\epsilon^{-1}(\max_{x(s)\leq y\leq x(s)+\epsilon}|u(s,y)-u(s,x(s))|)^2\leq\epsilon^{-1}(\int_{x(s)}^{x(s)+\epsilon}|u_x(s,y)|dy)^2
	\leq \epsilon^{-1}(\epsilon^{\frac{1}{2}}\|u_x(s)\|_{L^2})^2
	\notag\\
	&\leq\|m\|_{H^1}
\end{align}
Combining \eqref{con1} and \eqref{con2}, we show the estimate the integral
\begin{align}
	&(\int_{\tau-\epsilon}^{\tau}+\int_{t}^{t+\epsilon})\int_{x(s)}^{x(s)+\epsilon}m_x(\psi_t+u\psi)dxds
	\notag\\
	&\leq(\int_{\tau-\epsilon}^{\tau}+\int_{t}^{t+\epsilon})(\int_{x(s)}^{x(s)+\epsilon}m_x^2dy)^{\frac{1}{2}}(\int_{x(s)}^{x(s)+\epsilon}(\psi_t+u\psi)^2dy)^{\frac{1}{2}}
	\notag\\
	& \leq 2\epsilon\|m\|_{H^1}(\int_{x(s)}^{x(s)+\epsilon}\epsilon^{-2}Cdy)^{\frac{1}{2}}\leq C\epsilon^{\frac{1}{2}}\rightarrow 0
\end{align}
as $\epsilon \rightarrow 0$.The above analysis has shown that
\begin{align}
	\lim_{\epsilon\rightarrow 0}\int_{\tau-\epsilon}^{t+\epsilon}m_x(\psi_t+u\psi)ds=0.
\end{align}
Therefore we arrive at \eqref{3.2}.
\item[6.] Using the uniqueness of $\beta$, we can prove uniqueness of $x(t)$. 
	\end{itemize}
\end{proof}
 \begin{lemm}\label{lemm3}
 	Let m=m(t,x) be a conservative solution of \eqref{ceg1}. Then the map$(t,\beta)\mapsto m(t,\beta)\triangleq m(t,x(t,\beta))$ is Lipschitz continuous, with a constant depending only on the norm $\|m_0\|_{H^1}$
 \end{lemm}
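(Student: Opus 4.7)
The plan is to bound the increment $m(t_2,\beta_2)-m(t_1,\beta_1)$ by splitting via the triangle inequality into a pure-$\beta$ increment at fixed time and a pure-$t$ increment at fixed $\beta$:
\begin{align*}
|m(t_2,\beta_2)-m(t_1,\beta_1)|\le |m(t_2,\beta_2)-m(t_2,\beta_1)|+|m(t_2,\beta_1)-m(t_1,\beta_1)|.
\end{align*}
The first term is already controlled: by the estimate \eqref{MMM} proved in Lemma \ref{lemm1} applied at the fixed time $t_2$, one has $|m(t_2,\beta_2)-m(t_2,\beta_1)|\le\tfrac12|\beta_2-\beta_1|$, uniformly in $t_2\ge 0$. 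So the only work left is the time-increment at a fixed label $\beta$.

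For the second term I would apply Lemma \ref{lemm2}. Let $t\mapsto x(t)=x(t,\beta_1)$ be the unique characteristic through the point associated with $\beta_1$; then identity \eqref{3.2} gives
\begin{align*}
|m(t_2,x(t_2,\beta_1))-m(t_1,x(t_1,\beta_1))|=\Bigl|\int_{t_1}^{t_2}F(s,x(s,\beta_1))\,ds\Bigr|\le \|F\|_{L^\infty([0,\infty)\times\mathbb R)}\,|t_2-t_1|.
\end{align*}
Hence the whole question reduces to verifying that $F$ is bounded in $L^\infty$ by a constant depending only on $\|m_0\|_{H^1}$ (and $\lambda$). Running down the definition of $F$ term by term, every summand is a product of quantities already controlled in $L^\infty$ during the proof of Lemma \ref{lemm1}: $\|m\|_{L^\infty}\le\|m\|_{H^1}$, $\|u\|_{L^\infty}\le C_\infty$, and the $L^\infty$ bounds on $P_1,P_{1x},P_2,P_3,P_{3x},P_5,P_{5x}$ in \eqref{p1}--\eqref{p3} and \eqref{p5}. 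The only slightly delicate terms are $P_4$ and $P_{4x}$, which contain $H=um_x-um$; but these are precisely what was estimated in \eqref{p4} using the $L^1$ bound on $um_x$. Collecting all contributions yields $\|F\|_{L^\infty}\le C(\|m\|_{H^1}+\|m\|_{H^1}^2+\|m\|_{H^1}^3)$, with $\|m\|_{H^1}$ conserved along the flow, giving a bound depending only on $\|m_0\|_{H^1}$.

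Combining the two estimates, $(t,\beta)\mapsto m(t,\beta)$ is Lipschitz with constant $\max\{\tfrac12,\|F\|_{L^\infty}\}$ in the sup metric on $\mathbb R_+\times\mathbb R$, which is the claimed property. The only real obstacle is the pointwise-in-$(s,x)$ control of $F$ along characteristics; this could in principle be delicate because $F$ involves $m_x$ through $H$, but the convolution structure $P_4=p\ast H$ smooths $H$ so that only $L^1$ information on $um_x$ is needed, and that was already supplied in Lemma \ref{lemm1}. Everything else is a straightforward triangle-inequality assembly.
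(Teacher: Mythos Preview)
Your $\beta$-increment estimate is fine and matches the paper. The gap is in the time increment. You write ``Let $t\mapsto x(t)=x(t,\beta_1)$ be the unique characteristic through the point associated with $\beta_1$; then identity \eqref{3.2} gives\ldots''. But the curve $t\mapsto x(t,\beta_1)$ with $\beta_1$ held fixed is \emph{not} a characteristic. In the proof of Lemma~\ref{lemm2} the characteristic is $t\mapsto x(t,\beta(t))$ where $\beta(\cdot)$ solves $\dot\beta=G(t,\beta)$; the label $\beta$ drifts along the flow. Consequently \eqref{3.2} does not control $|m(t_2,x(t_2,\beta_1))-m(t_1,x(t_1,\beta_1))|$ directly, because the two endpoints do not lie on the same characteristic.

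The fix is exactly what the paper does: starting from $\bar\beta=\beta_1$ at time $\tau=t_1$, let $\beta(t)$ be the solution of \eqref{beta}, and insert the intermediate point $m(t_2,\beta(t_2))$. Then
\[
|m(t_2,\beta_1)-m(t_1,\beta_1)|\le |m(t_2,\beta_1)-m(t_2,\beta(t_2))|+|m(t_2,\beta(t_2))-m(t_1,\beta(t_1))|.
\]
The second term is now genuinely along a characteristic, so \eqref{3.2} bounds it by $\|F\|_{L^\infty}\,|t_2-t_1|$ as you argued. The first term is a $\beta$-increment at time $t_2$, bounded via \eqref{MMM} by $\tfrac12|\beta(t_2)-\beta_1|\le \tfrac12\|G\|_{L^\infty}\,|t_2-t_1|$, using the integral equation \eqref{beta}. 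This produces the paper's final bound $(\|G\|_{L^\infty}+\|F\|_{L^\infty})\,|t_2-t_1|$. Your discussion of why $\|F\|_{L^\infty}$ is controlled by $\|m_0\|_{H^1}$ is correct and useful; you just need to route the time increment through the moving label $\beta(t)$ rather than the frozen one.
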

\begin{proof}
	Combining \eqref{MMM} and \eqref{3.2}, we obtain
	\begin{align}
		|m(t,x(t,\bar{\beta}))-m(\tau,\bar{\beta})|\leq|m(t,x(t,\bar{\beta}))-m(t,\beta(t))|+|m(t,x(t,\beta(t)))-m(\tau,\beta(\tau))|
		\notag\\
		\leq \frac{1}{2}|\beta(t)-\bar{\beta}|+(t-\tau)\|F\|_{L^\infty}\leq(t-\tau)(\|G\|_{L^\infty}+\|F\|_{L^\infty}).
	\end{align}
\end{proof}

\section{Proof of Theorem\ref{main}}
	We need to seek  a good characteristic, and employ how the gradient $m_x$ of a conservative solution varies along the good characteristic, and complete the  proof of uniqueness.
	\begin{proof}\textbf{Step 1.} Lemmas \ref{lemm1}-\ref{lemm3} ensure that the map $(t,\beta)\mapsto (x,u)(t,\beta)$ and $\beta\mapsto F(t,\beta)$ are Lipschitz continuous. Thanks to  Rademacher's theorem, the partial derivatives $x_t, x_{\beta}, m_t,m_{\beta}$ and $F_{\beta}$ exist almost everywhere. Moreover,  $x(t,\beta)$
		is the unique solution to \eqref{F1}, and the following holds.

		$\textbf{(GC)}$ For   $a.e.~ t\geq0$, the point $(t,\beta(t,\bar{\beta}))$ is a Lebesgue point for the partial derivatives $x_t,x_{\beta},m_t,m_{\beta}$
	and $F_{\xi}$. Moreover, $x_{\beta}(t,\beta)>0$ for $a.e. ~t\geq0.$
	
	If 	$\textbf{(GC)}$ holds, then  $t \rightarrow x(t,\bar{\beta})$ is a good characteristic.
	
	\textbf{Step 2.}  We now construct an O.D.E. to describe that the quantities $m_{\beta}$ and $x_{\beta}$ vary along a good characteristic. Supposing that $t,~\tau\notin\mathcal{N},$ and $y(t,\xi)$ is a good characteristic, we then have
	\begin{align*}
		x(t,\beta(t,\bar{\beta}))=\bar{x}(t,\bar{\beta})+\int_\tau^tu(s,x(s,\beta(t;\tau,\bar{\beta}))) ds.
	\end{align*}
    Where $\mathcal{N}$ denotes a null set with meas($\mathcal{N}$)=0 such that for every $t\notin \mathcal{N}$ the measure $\mu_{(t)}$ is absolutely  continous and has density $m_x^2(t,\dot )$.
    
	Differentiating the above equation with respect to $\bar{\xi}$, we deduce that
	\begin{align}\label{p35}
		x_{\beta}\frac{\partial}{\partial\bar{\beta}}\beta(t;\tau,\bar{\beta})=x_{\beta}(\tau,\bar{\beta})+\int_\tau^tu_\beta (s,\beta(t;\tau,\bar{\beta}))\frac{\partial}{\partial\bar{\beta}}\beta(t;\tau,\bar{\beta})ds.
	\end{align}
	Likewise, we have
	\begin{align}\label{p36}
	m_{\beta}\frac{\partial}{\partial\bar{\beta}}\beta(t;\tau,\bar{\beta})=m_{\beta}(\tau,\bar{\beta})+\int_\tau^tF_\beta (s,\beta(t;\tau,\bar{\beta}))\frac{\partial}{\partial\bar{\beta}}\beta(t;\tau,\bar{\beta})ds.
	\end{align}
	From \eqref{p35}-\eqref{p36}, we end up with
	\begin{equation}\label{qp37}
		\left\{\begin{aligned}
			&\frac{d}{dt}x_\beta+G_\beta x_\beta=u_\beta,\\
			&\frac{d}{dt}m_\beta+G_\beta m_\beta=F_\beta.
		\end{aligned}\right.
	\end{equation}

	\textbf{Step 3.} We now  return to the original coordinates $(t, x)$ and derive an evolution equation for the
	partial derivative $k_x$ along a “good” characteristic curve.
	For a fixed point $(t, x)$ with $t\notin \mathcal{N}$. Suppose
	that $\bar{x}$ is a Lebesgue point for the map $x\rightarrow m_x(t, x)$, and $\bar{\beta}$ satisfies $\bar{x} = x(t, \bar{\beta}),$ and suppose
	that $t\rightarrow \beta(t;\tau,\bar{\beta})$ is a good characteristic, which implies $\textbf{(GC)}$ holds. We observe that
	\begin{align}\label{p38}
		m_{\beta}(t,x)=\frac{1}{x^2_\beta(t,\bar{\beta})}-1>0,\ x_\beta(t,\bar{\beta})>0,
	\end{align}
	which implies that $y_{\xi}(t,\xi)>0.$
	
	Hence,  the partial derivative
	$m_x$ and $u_x$ can be calculated as shown below
	\begin{align*}
		m_x(t,x(t,\beta(t;\tau,\bar{\beta})))=\frac{m_{\beta}(t,\beta(t;\tau,\bar{\beta}))}{x_{\beta}(t,\beta(t;\tau,\bar{\beta}))}.~~~~~	u_x(t,x(t,\beta(t;\tau,\bar{\beta})))=\frac{u_{\beta}(t,\beta(t;\tau,\bar{\beta}))}{x_{\beta}(t,\beta(t;\tau,\bar{\beta}))}.
	\end{align*}
	Applying \eqref{qp37} to describe the evolution of $m_{\beta}$ and $x_{\beta},$ we  infer that the map $t\rightarrow m_x(t,x(t,\beta(t,{\bar{\beta}})))$ is absolutely continuous. It follows that
	\begin{align*}
		\frac{d}{dt}m_x(t,x(t,\beta(t,{\bar{\beta}})))&=\frac{d(\frac{m_{\beta}}{x_{\beta}})}{dt}=\frac{x_\beta \frac{d}{dt}m_\beta-m_\beta\frac{d}{dt}x_\beta}{{x_{\beta}}^2}=\frac{x_\beta (F_\beta-m_\beta G_\beta)-m_\beta(u_\beta-G_\beta x_\beta)}{{x_{\beta}}^2}
		\notag\\
		&=\frac{F_\beta x_\beta-u_\beta m_\beta}{x_\beta^2}=\frac{F_\beta}{x_\beta}-u_xm_x.
	\end{align*}
	Hence, we conclude that as long as $y_{\beta}\neq 0$, the map $t\rightarrow k_x$  is absolutely continuous.
	
	\textbf{Step 4.} Consider the function
	\begin{equation}
		v\triangleq
	\left\{\begin{aligned}
		&2\arctan m_x~~~~~~if~~~0<x_\beta\leq1\\
		&~~~~~~~~~\pi ~~~~~~~~~if~~~x_\beta=0.\\
	\end{aligned}\right.
	\end{equation}
    Observe that this implies
    \begin{align}
    	x_\beta=\frac{1}{1+m_x^2}=\cos^2\frac{v}{2},~~~\frac{m_x}{1+m_x^2}=\frac{1}{2}\sin v,~~~\frac{m_x^2}{1+m_x^2}=\sin^2\frac{v}{2}
    \end{align}
	From which it follows that	
	\begin{equation}
		\left\{\begin{aligned}
			&\frac{d}{dt}\beta(t,\bar{\beta})=G,\\
			&\frac{d}{dt}x(t,\beta(t,\bar{\beta}))=u(t,\beta(t,\bar{\beta})),\\
			&\frac{d}{dt}m(t,\beta(t,\bar{\beta}))=F(t,\beta(t,\bar{\beta})),\\
			&\frac{d}{dt}v(t,\beta(t,\bar{\beta}))=2\cos^{2} \frac v 2 P-N\sin v-\sin^{2} \frac v 2.\\
		\end{aligned} \right.\label{Ky1}
	\end{equation}	
with
$N(t,\beta)=\big(-m-P_1-P_{1x}+\frac 1 {2\lambda} (-P_2+P_{3x}+P_{2x}-m^2)\big)(t,\beta)$ and~$P=F+mN+\lambda u+\frac {m^2} {2}$
The function $P_i$ admits a representation of the variavle $\beta$, namely
\begin{align}\label{3P}
	\left\{\begin{array}{lllll}
		P_1=-\frac 1 2 \int_{-\infty}^{+\infty} e^{-|\int_{\beta'}^{\beta} \cos^{2} \frac v 2  ds|} m\cos^{2} \frac v 2 d\beta',  \\
		P_{1x}=-\frac 1 2 \big(\int_{\beta}^{+\infty}-\int_{-\infty}^{\beta}\big) e^{-|\int_{\beta'}^{\beta} \cos^{2} \frac v 2  ds|} m\cos^{2} \frac v 2 d\beta',  \\
		P_2=-\frac 1 2 \int_{-\infty}^{\infty} e^{-|\int_{\beta'}^{\beta} \cos^{2} \frac v 2  ds|} m^2\cos^{2} \frac v 2 d\beta',  \\
		P_{2x}=-\frac 1 2 \big(\int_{\beta}^{+\infty}-\int_{-\infty}^{\beta}\big) e^{-|\int_{\beta'}^{\beta} \cos^{2} \frac v 2 ds|} m^2\cos^{2} \frac v 2 d\beta',  \\
		P_3=-\frac 1 2 \int_{-\infty}^{+\infty} e^{-|\int_{\beta'}^{\beta} \cos^{2} \frac v 2 ds|} \sin^{2} \frac v 2d\beta',  \\
		P_{3x}=-\frac 1 2 \big(\int_{\beta}^{+\infty}-\int_{-\infty}^{\beta}\big) e^{-|\int_{\beta'}^{\beta} \cos^{2} \frac v 2 ds|} \sin^{2} \frac v 2 d\beta',  \\
		P_4=-\frac 1 2 \int_{-\infty}^{+\infty} e^{-|\int_{\beta'}^{\beta} q\cos^{2} \frac v 2  ds|}\frac  2 u\sin v-mu\cos^2 \frac v 2d\beta', \\
		P_{4x}=-\frac 1 2 \big(\int_{\beta}^{+\infty}-\int_{-\infty}^{\beta}\big) e^{-|\int_{\beta'}^{\beta} \cos^{2} \frac v 2  ds|} \frac  2 u\sin v-mu\cos^2 \frac v 2d\beta',  \\
		P_5=-\frac 1 2 \int_{-\infty}^{+\infty} e^{-|\int_{\beta'}^{\beta} \cos^{2} \frac v 2 ds|} P_3\cos^{2} \frac v 2 d\beta',  \\
		P_{5x}=-\frac 1 2 \big(\int_{\beta}^{+\infty}-\int_{-\infty}^{\beta}\big) e^{-|\int_{\beta'}^{\beta} \cos^{2} \frac v 2 ds|} P_3\cos^{2} \frac v 2 d\beta'.  \\
	\end{array}\right.
\end{align}
	For any $\bar{\beta}\in \mathbb{R},$ we deduce that the following initial  conditions
	\begin{equation}
		\left\{\begin{aligned}
			&\beta(0,\bar{\beta})=\bar{\beta},\\
		&\beta(0,\bar{\beta})=x(0,\bar{\beta}),\\
		&m(0,\bar{\beta})=m_0(x(0,\bar{\beta})),\\
		&v(0,\bar{\beta})=2\arctan u_{0,x}(x(0,\bar{\beta})).\\
		\end{aligned}\right.\label{K01}
	\end{equation}
	Making use of  all coefficients is Lipschitz continuous and the previous steps again, the system  \eqref{Ky1}-\eqref{K01} has a unique globally solution.
	
	\textbf{Step 5.} Let $m$ and $\tilde{m}$ be two conservative weak solution of  \eqref{ceg1} with the same
	initial data $\bar{m}\in H^1(\mathbb{R})$
	. For $a.e. ~t\geq0,$  the corresponding Lipschitz continuous maps $\beta \mapsto x(t,\beta), {\beta} \mapsto \tilde{x}(t,\beta)$ are
	strictly increasing. Hence they have continuous inverses, say $x\mapsto{\beta}^{-1}(t,x),
	x\mapsto{\tilde{\beta}}^{-1}(t,x)$.
	Thus, we deduce that
	$$x(t,\beta)=\tilde{x}(t,\beta),\  u(t,x(t,\beta))=\tilde{u}(t,{x}(t,\beta)).$$
	Moreover, for $a.e.~ t\geq0$, we have
	$$m(t,x)=m(t,y(t,\beta))=\tilde{m}(t,\tilde{x}(t,\beta))=\tilde{m}(t,x).$$
	Then, we finish the proof of Theorem \ref{main}.
\end{proof}	
\smallskip
\noindent\textbf{Acknowledgments} This work was
partially supported by the National Natural Science Foundation of China (No.12171493).

\noindent\textbf{Data Availability.}
The data that support the findings of this study are available on citation. The data that support the findings of this study are also available
from the corresponding author upon reasonable request.

	\phantomsection
	\addcontentsline{toc}{section}{\refname}
	%Ìí¼Ó²Î¿¼ÎÄÏ×µ½ÊéÇ©£¬ºê°ü hyperref
	\bibliographystyle{abbrv} %plain ,%alpha, %abbrv
	\bibliography{Feneref}
\end{document}